\theoremstyle{plain}
\newtheorem{theorem}{Theorem}[section]
\newtheorem{proposition}[theorem]{Proposition}
\newtheorem{corollary}[theorem]{Corollary}
\newtheorem{lemma}[theorem]{Lemma}
\theoremstyle{definition}
\newtheorem{example}[theorem]{Example}
\theoremstyle{remark}
\newtheorem{remark}[theorem]{Remark}
\newcommand{\SN}{\mathbb{N}}                    
\newcommand{\SC}{\mathbb{C}}                    
\newcommand{\CO}{\mathcal{O}}                    %
\newcommand{\CM}{\mathcal{M}}                    %
\newcommand{\kk}{\mathbb{k}}                    
\newcommand{\Rho}{\mathrm{P}}
\newcommand{\End}{\operatorname{End}}
\newcommand{\GL}{\operatorname{GL}}
\newcommand{\head}{\operatorname{h}}
\newcommand{\Hom}{\operatorname{Hom}}
\newcommand{\SL}{\operatorname{SL}}
\newcommand\adam{}
\newcommand\balazs{}
\DeclareMathOperator{\module}{\mathrm{-mod}}
\DeclareMathOperator{\rk}{\mathrm{rk}}
\mathchardef\mhyphen="2D
\newcommand\alg{A} 
\title{Slicing up multigraded linear series}
\author{\'Ad\'am Gyenge}
\address{Department of Algebra and Geometry, Institute of Mathematics, Budapest University of Technology and Economics, 
M\H{u}egyetem rakpart 3, H-1111, 
Budapest, Hungary}
\email{Gyenge.Adam@ttk.bme.hu}
\author{Bal\'{a}zs Szendr\H{o}i}
\address{University of Vienna, Austria}
\email{balazs.szendroi@univie.ac.at}
\subjclass[2020]{Primary 14C20; Secondary 14C05, 14D20}
\keywords{linear series, graded ring, cornering, Hilbert scheme}
\begin{document}
\begin{abstract}
Multigraded linear series generalize the classical morphism to the linear series of a basepoint-free line bundle on a scheme. We investigate the collection of the natural cornering morphisms into elementary bigraded linear series obtained from direct summands of the original globally generated vector bundle. Our main result is a condition on the injectivity of the product morphism. We apply our result in two examples: modules over the reconstruction algebra and equivariant Hilbert and Quot schemes of quotient stacks. 
\end{abstract}

\maketitle



\section{Introduction}


Multigraded linear series were introduced by Craw-Ito-Karmazyn in \cite{craw2018multigraded} as a generalisation of linear series.
Given a scheme $Y$ equipped with a collection of globally generated vector bundles $E_1,\dots,E_n$ their construction provides a universal morphism from $Y$ to a certain fine moduli space $\CM (E)$ of cyclic modules over the endomorphism algebra of \[E = \CO \oplus E_1 \oplus \dots \oplus E_n.\] 

A natural construction arising in this context is {\balazs to restrict} the collection of vector bundles to a subset of the original ones:
\[ E_I= \CO \oplus \bigoplus_{i \in I} E_i \quad \textrm { for some } I \subseteq \{ 1,\dots,n \}. \]
 This process is called cornering, and there is a corresponding natural cornering morphism
\[ \CM (E) \to \CM (E_I) \]
between the original and the cornered moduli spaces.

A condition for the surjectivity of the cornering morphism was given in \cite[Proposition~3.7]{craw2018multigraded}. In this paper we consider the complementary question of injectivity. Of course a single cornering morphism is rarely injective (though see Remark~\ref{rem:inj} below). Therefore we consider a collection of cornering morphisms for subsets $I_1,\dots,I_l \subset\{0,\dots,n\}$.

In fact, our result applies in a more general setting involving certain {\adam quiver moduli spaces} $\CM (A,v)$ for a quiver algebra $A$ and dimension vector $v=(v_0,\dots,v_n)$. In the case of multigraded linear series the dimension vector has components $v_i=\mathrm{rk}\,E_i$ for each $i$. Such a quiver algebra $A$ {\balazs gives} rise similarly to cornered algebras $A_I$ which we consider together with the restricted dimension vector $v_I=(v_i)_{i\in I}$; for details see Section~\ref{sec:prelim}. Certain immersions of quiver varieties were investigated, with different methods, in the recent paper \cite{bertsch2025quiver}. Our main result is the following.
\begin{theorem}[{Theorem~\ref{thm:closedimm}}] 
\label{thm:main}
Let $I_1,\dots,I_l$ be subsets of $\{0,\dots,n\}$ that give a covering, that is, $\cup_{i=1}^l I_i=\{0,\dots,n\}$ such that $0 \in I_i$, $1 \leq i \leq l$. 
Then the product of the cornering morphisms 
\[\mathcal{M}(A,v) \to \prod_{i=1}^l \mathcal{M}(A_{I_l},v_{I_l})\]
induces a closed immersion on the reduced scheme underlying $\mathcal{M}(A,v)$.
\end{theorem}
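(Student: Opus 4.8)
The plan is to apply the standard criterion that a finite-type morphism is a closed immersion exactly when it is proper, unramified, and universally injective (equivalently, a proper monomorphism). Since the target is separated and the multigraded linear series are, by their construction as GIT quotients projective over an affine base, proper over the base, the product morphism $f\colon \CM(E)\to\prod_{k}\CM(E_{I_k})$ is automatically proper (cancellation against the separated target). Thus the whole problem reduces, on the reduced scheme, to two pointwise statements: that $f$ is injective on geometric points, and that it is unramified, i.e.\ injective on Zariski tangent spaces at closed points. Properness then upgrades the resulting immersion to a \emph{closed} immersion, and the reduced hypothesis is what lets us phrase unramifiedness as injectivity of tangent maps.

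The engine for both injectivity statements is a reconstruction of a cyclic module from its corners. Write $A=\End(E)$ with orthogonal idempotents $e_0,\dots,e_n$, where $e_0$ is the projector to the summand $\CO$. A closed point of $\CM(E)$ is the isomorphism class of a cyclic $A$-module $M=\bigoplus_j M_j$ with $\dim M_j=1$, generated by $v_0\in M_0$. Writing $M=A e_0/K$, the surjection $A e_0=\bigoplus_j e_j A e_0\to M$ is graded by the $e_j$, so $M$ is determined by its vertex components $\pi_j\colon e_j A e_0\to M_j$, equivalently by the lines $K_j=\ker\pi_j$. The key identity is that for $j\in I_k$ one has $e_j\,\End(E)\,e_0=e_j\,\End(E_{I_k})\,e_0$, because $\CO$ and $E_j$ are both summands of $E_{I_k}$ and the $\Hom$ between two fixed summands does not depend on the ambient bundle. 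Hence the corner $M_{I_k}$ records exactly the quotients $\pi_j$ for $j\in I_k$. Since $0\in I_k$ for every $k$ and the $I_k$ cover $\{0,\dots,n\}$, each $\pi_j$ is read off from some corner, so the family of corners determines $(\pi_j)_j$ and therefore $M$; this is injectivity on points. The same bookkeeping handles tangent vectors: a tangent vector at $[M]$ is a class in $\Hom_A(K,M)$, i.e.\ a first-order deformation of the submodule $K$; its image in the corner $\CM(E_{I_k})$ is the induced deformation of $\bigoplus_{j\in I_k}K_j$, and the covering property forces the deformation to vanish once all corner restrictions do.

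I expect the clean packaging of this argument to go through the tautological coordinates. The vertex quotients assemble into a morphism $\CM(E)\to\prod_{j}\SP\bigl((e_j A e_0)^\vee\bigr)$, and the identity $e_j\,\End(E)\,e_0=e_j\,\End(E_{I_k})\,e_0$ shows that the cornering morphism to $\CM(E_{I_k})$ is intertwined with the coordinate projection onto the factors indexed by $j\in I_k$. Consequently $f$, followed by the embedding of the product of corners, agrees (up to repeating factors) with the tautological map into $\prod_j\SP\bigl((e_j A e_0)^\vee\bigr)$; because the $I_k$ cover, every factor survives, so the projection $\prod_{\text{all }j}\to\prod_k\prod_{j\in I_k}$ is a diagonal closed immersion of separated schemes. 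Chasing the resulting commuting diagram, and using that a morphism $g$ with $h\circ g$ a closed immersion and $h$ separated is itself a closed immersion (it is an immersion, and proper by cancellation), reduces the theorem to the single input that the tautological map is a closed immersion on $\CM(E)_{\mathrm{red}}$.

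The main obstacle is that $A=\End(E)$ contains morphisms $E_i\to E_j$ that do not factor through $\CO$ and that no single corner $\End(E_{I_k})$ ever sees, so a priori such morphisms might carry information about $M$ invisible to the corners. The resolution is exactly the cyclicity (generation at $e_0$): every element of $M$ has the form $a\cdot v_0$ with $a\in A$, and its value is governed by the components of $a$ in $\bigoplus_j e_j A e_0$, each of which lies in a corner, so the non-cornered morphisms act only through data already determined. Making this rigorous not merely on points but for the scheme structure is where I expect the real work to lie, and where the reduced hypothesis enters: one must show that the ideal cutting out $\CM(E)_{\mathrm{red}}$ inside $\prod_j\SP\bigl((e_j A e_0)^\vee\bigr)$ is generated by the corner relations, so that no relation genuinely coupling the non-cornered morphisms is needed. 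Combining this with the properness established above is what I anticipate will finish the proof.
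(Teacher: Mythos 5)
Your pointwise reconstruction principle is the right engine and matches the paper's: for $j\in I_k$ one has $e_jAe_0=e_jA_{I_k}e_0=\Hom(\CO,E_j)$, a $0$-generated module $M=Ae_0/K$ is determined by the graded pieces $K_j=e_jK\subseteq e_jAe_0$, and each corner recovers the $K_j$ with $j\in I_k$; this is exactly the content of the paper's Proposition~\ref{prop:PiJembeds} ($\psi_F$ surjective, $\phi_F$ injective, $F\cong\mathrm{im}(\phi_F\circ\psi_F)$). The properness reduction is also fine. But the step you yourself flag as ``where the real work will lie'' is a genuine gap, and it is precisely the step where the paper does something you do not have. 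The paper does not argue via unramifiedness or via an ambient coordinate space at all: it takes the reduced image $X$ of $\tilde\tau$, pulls the tautological families on the factors $\mathcal{M}(E_{[i]})$ back to $X$, forms the canonical morphism $\widetilde\nu\colon\bigoplus_i j_{[i]!}(T_{0,i}\oplus T_i)\to\bigoplus_i j_{[i]\ast}(T_{0,i}\oplus T_i)$ of sheaves of $A$-modules (Lemma~\ref{lem_universal_morph}), and shows that $\mathrm{im}(\widetilde\nu|_X)$ is a flat family of $0$-generated $A$-modules of dimension vector $v$. The universal property of the fine moduli space $\mathcal{M}(E)$ then yields a morphism $\epsilon\colon X\to\mathcal{M}(E)$ inverse to $\tilde\tau$ on closed points, which suffices for reduced schemes. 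The essential point is that Proposition~\ref{prop:PiJembeds} is a \emph{functorial} reconstruction, natural in $F$, so it can be performed in families over $X$; your reconstruction is stated only for a single module over a point and does not globalize by itself.

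Two further problems with the routes you propose for closing the gap. First, the ``tautological coordinates'' target $\prod_j\SP\bigl((e_jAe_0)^\vee\bigr)$ is not well formed in the generality of the theorem: the ranks $v_j=\rk(E_j)$ need not equal $1$ (your claim $\dim M_j=1$ is false, e.g.\ for Kapranov's bundle or the orbifold Quot schemes), so one would at best get Grassmannians, and $e_jAe_0=H^0(E_j)$ is in general only a finitely generated $R$-module, not finite-dimensional over $\kk$ (e.g.\ $R=\kk[x,y]^\Gamma$ in the reconstruction-algebra example), so no such projective ambient space exists; the claim that the map to it is a closed immersion is moreover essentially the statement to be proved, so this reduction is circular. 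Second, the unramifiedness route requires identifying the tangent space of $\mathcal{M}(E)$ at $[M]$ with $\Hom_A(K,M)$ (via $\operatorname{Ext}^1_A(M,M)$) and identifying the differential of each cornering morphism with restriction to $e_{I_k}K$; neither identification is carried out, and note that if this argument did go through it would prove that $\tilde\tau$ itself, not merely its restriction to $\mathcal{M}(E)_{\mathrm{red}}$, is a closed immersion --- a stronger statement than the theorem claims, which should at least prompt a careful check of those identifications. As it stands, the proposal establishes injectivity on closed points and properness, which together give only a finite bijection onto the image, not a closed immersion.
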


The components of the image of a point of $\mathcal{M}(A,v)$ under this closed immersion can be considered as \lq slices\rq, from which the original point in $\mathcal{M}(A,v)$ can be reconstructed.

Such a situation occurs in a number of moduli problems. 
We apply first our result to modules over the reconstruction algebra associated with a finite subgroup $G < \GL(2,\SC)$ without pseudo-reflections. In this setting there exists an index set $\mathrm{Sp}$ and a bundle $T_{\mathrm{Sp}}$ on the corresponding $G$-Hilbert scheme such that $\mathcal{M}(T_{\mathrm{Sp}})$ is the minimal resolution of the quotient singularity $\SC^2/G$; the spaces $\mathcal{M}(T_{[\rho]})$ obtained from $\mathcal{M}(T_{\mathrm{Sp}})$ via cornering with $[\rho]=\{\rho_0, \rho\}$ give certain partial resolutions.
\begin{corollary} [{Corollary~\ref{cor:partresclimm}}]
\label{cor:1}
The product of the cornering morphisms
\[\mathcal{M}(T_{\mathrm{Sp}}) \to \prod_{\rho \in \mathrm{Sp} \setminus \rho_0} \mathcal{M}(T_{[\rho]})\]
is a closed immersion.
\end{corollary}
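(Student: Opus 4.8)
The plan is to reduce Corollary~\ref{cor:1} to a direct application of the main theorem. The statement about the reconstruction algebra sets up a bundle $T_{\mathrm{Sp}}$ built from the nontrivial representations in the index set $\mathrm{Sp}$, together with the trivial summand corresponding to $\rho_0$, so the whole construction fits the template of the general theorem once we identify the relevant indexing. First I would make the dictionary explicit: the set $\mathrm{Sp}$ plays the role of the full index set $\{0,\dots,n\}$, with $\rho_0$ corresponding to the distinguished index $0$ (the trivial summand $\CO$), and each elementary bigraded piece $T_{[\rho]}$ (with $[\rho]=\{\rho_0,\rho\}$) corresponding to a two-element subset $I_i=\{0,i\}\subseteq\{0,\dots,n\}$. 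Under this identification, the spaces $\CM(T_{[\rho]})$ are exactly the cornered moduli spaces $\CM(E_{I_i})$, and the product of cornering morphisms in the corollary coincides with the product map appearing in the theorem.

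\smallskip

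The key verification is that the family of subsets $\{[\rho] : \rho \in \mathrm{Sp}\setminus\rho_0\}$ satisfies the covering hypotheses of the theorem. Each subset $[\rho]=\{\rho_0,\rho\}$ contains $\rho_0$ by construction, which is precisely the requirement that $0\in I_i$ for every $i$. Moreover, taking the union over all $\rho\in\mathrm{Sp}\setminus\rho_0$ gives
\[
\bigcup_{\rho\in\mathrm{Sp}\setminus\rho_0}[\rho]=\{\rho_0\}\cup\bigl(\mathrm{Sp}\setminus\rho_0\bigr)=\mathrm{Sp},
\]
so the subsets cover the full index set. These two observations are exactly the two hypotheses of the theorem, so the theorem applies verbatim and yields that the product morphism induces a closed immersion on the reduced scheme underlying $\CM(T_{\mathrm{Sp}})$.

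\smallskip

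Finally I would address the gap between "closed immersion on the reduced scheme" in the theorem and the unqualified "closed immersion" asserted in the corollary. The cleanest route is to invoke whatever structural facts are established about $\CM(T_{\mathrm{Sp}})$ in this reconstruction-algebra setting: since $\CM(T_{\mathrm{Sp}})$ is identified with the minimal resolution of the quotient singularity $\SC^2/\Gamma$, it is in particular reduced (indeed smooth), so the reduced scheme underlying $\CM(T_{\mathrm{Sp}})$ equals $\CM(T_{\mathrm{Sp}})$ itself and the qualifier becomes vacuous. I expect the main obstacle to be precisely this reducedness point: I would need to confirm that the moduli space in the reconstruction-algebra example is genuinely reduced (and that the bigraded-linear-series formalism of \cite{craw2018multigraded} really does realize these $T_{[\rho]}$ as the elementary cornered spaces), rather than carrying a nonreduced structure that would force the weaker conclusion. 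Once reducedness is in hand, the corollary follows immediately from the theorem together with the translation of notation above.
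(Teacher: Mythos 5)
Your proposal is correct and matches the paper's (largely implicit) argument: apply Theorem~\ref{thm:closedimm} with the elementary subsets $[\rho]=\{\rho_0,\rho\}$, which trivially cover $\mathrm{Sp}$ and each contain $\rho_0$, and then remove the ``reduced scheme'' qualifier using the fact, recorded in the paper via \cite[Theorem~4.4]{craw2018multigraded}, that $\mathcal{M}(T_{\mathrm{Sp}})$ is the minimal resolution of $\SC^2/\Gamma$ and hence reduced. Your explicit attention to the reducedness point is exactly the step the paper relies on but does not spell out.
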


Second, when $\Gamma < \SL(2,\SC)$ is a finite subgroup {\balazs with $r+1$ conjugacy classes}, our result implies that a certain map from the equivariant Hilbert scheme of the quotient stack $[\SC^2/\Gamma]$ to the product of certain Quot schemes, which were constructed and identified with Nakajima quiver varieties in \cite{craw2021quot}, is a closed immersion.

\begin{corollary}[{Corollary~\ref{cor:hilbcorner}}] 
\label{cor:2}
Let $I_1,\dots,I_l$ be a covering of the set $\{0,\dots,r\}$ and $n \in \mathbb{N}^{r+1}$. The  product of the cornering morphisms
induces a closed immersion
\[ \mathrm{Hilb}^{n}([\SC^2/\Gamma]) \to \prod_{i=1}^l\mathrm{Quot}_{I_i}^{n_{I_i}} ([\SC^2/\Gamma]).\]
\end{corollary}
We remark that the relevant globally generated vector bundles on $\mathrm{Hilb}^{n}([\SC^2/\Gamma])$ correspond to vertices of a graph, the framed McKay quiver. These vertices are labelled $\{\infty,0,\dots,r\}$. The trivial bundle $\mathcal{O}$ corresponds to the vertex $\infty$, and it is implicitly included in each of the cornering sets; the schemes $\mathrm{Quot}_{I_i}^{n_{I_i}}([\SC^2/\Gamma])$ are the resulting cornered moduli spaces. Therefore, the setting of Corollary~\ref{cor:2} satisfies the assumptions of Theorem~\ref{thm:main}. The details will be explained in Section~\ref{sec:Hilb} below.

\begin{remark} \label{rem:inj} We discuss here a further example to illustrate our result, which however turns out to reduce to a classical statement. For integers $0<r<n$, let $E$ be the tilting bundle over the Grassmannian $\mathrm{Gr}(n, r)$ constructed in \cite{kapranov1984derived}. Its summands are indexed by Young diagrams of size at most $n-r$ and length at most $r$. Denote by $Y'(n-r,r)$ the set of such nontrivial diagrams. 
Let $[\lambda]=\{0,\lambda\}$ for any $\lambda \in Y'(n-r,r)$, where $0$ is the trivial partition $\lambda_1=\dots=\lambda_r=0$. By Theorem~\ref{thm:main}, there exists a closed immersion
\[\mathrm{Gr}(n, r) \to \prod_{\lambda \in Y'(n-r,r)}\mathcal{M}(E_{[\lambda]}).\]
However, one of the summands in the above bundle is
\[
\det(W)=S^{(1,\dots,1)}W.
\]
The corresponding morphism
\[
\mathrm{Gr}(n,r)\to \mathcal M(E_{[(1,\dots,1)]})
\]
is therefore the classical Pl\"ucker embedding. In particular, the product morphism
\[
\mathrm{Gr}(n,r)\to \prod_{\lambda\in Y'(n-r,r)} \mathcal M(E_{[\lambda]})
\]
is well known to be a closed immersion. 

More generally, if $Y=\mathcal M(E)$ and one summand $E_i$ is a very ample line bundle, then the universal morphism
\[
Y\to \mathcal{M}(E_{[i]})
\]
is a closed immersion, because the composition
with the inclusion $\mathcal{M}(E_{[i]}) \to |E_i|$ is the classical morphism to the linear series $|E_i|$. Hence, the induced product morphism to all cornered moduli spaces is also a closed immersion.
\end{remark}



The structure of the paper is as follows. In Section~\ref{sec:prelim} we collect necessarily preliminaries on multigraded linear series and their cornering. In Section~\ref{sec:reconstr} we establish our main technique for recovering modules from a set of its restrictions and we prove the main Theorem~\ref{thm:closedimm}.  Finally, in Section~\ref{sec:appl} we discuss the applications of the main result, proving Corollaries~\ref{cor:1} and \ref{cor:2}. 



\subsection*{Acknowledgements.} The first author was supported by the János Bolyai Research Scholarship of the Hungarian Academy of Sciences and by the National Research, Development and Innovation Fund of Hungary, within the Program of Excellence TKP2021-NVA-02 at the Budapest University of Technology and Economics. The authors thank the anonymous referee for numerous helpful remarks and corrections.

\section{Preliminaries}
\label{sec:prelim}

We work over an algebraically closed field $\kk$ of characteristic zero.

\subsection{Quiver varieties}

Throughout the paper we assume that $A$ is a finitely generated associative algebra which admits a presentation of the form
\[A \simeq \kk Q/I\]
for some finite connected quiver $Q$ and a two-sided ideal $I \subset \kk Q$ {\adam generated by linear combinations of paths of length at least one}. Denote the vertex set of $Q$ by $Q_0 = \{0, 1,\dots, n\}$ and by $(e_i)$, $i \in Q_0$ the corresponding set of orthogonal idempotents. 

Fix a dimension vector $v=(v_i) \in \SN^{Q_0}$. The space of stability conditions of $A$-modules of dimension $v$ is
\[ \Theta_{v}=\big\{\theta=(\theta_i): \mathbb{Z}^{Q_0} \to \mathbb{Q}\; : \; \sum_{i \in Q_0} \theta_iv_i=0 \big\}.\]
For $\theta \in \Theta_{v}$, an $A$-module $M$ of dimension vector $v$ is semistable (resp. stable) if $\theta(N) \geq 0$ (resp. $\theta(N) > 0$) for every nonzero proper $A$-submodule $N \subset M$. 
The space $\Theta_v$ admits a preorder $\geq$, and this induces a {\balazs finite} wall-and-chamber structure, where
$\theta,\theta' \in \Theta_v$ lie in the relative interior of the same cone if and only if both $\theta \geq \theta'$ and $\theta' \geq\theta$ hold in this preorder. The interiors of the top-dimensional cones in $\Theta_v$ are chambers, while the codimension-one faces of the closure of each chamber are walls. We say that $\theta \in \Theta_v$ is generic with respect to $v$, if it lies in some chamber. 

When the dimension vector $(v)$ is indivisible and $\theta \in \Theta_v$ is generic, there exists by \cite[Proposition 5.3]{king1994moduli} a fine moduli space
\[ \CM(A,v) \coloneqq \CM(A,v,\theta) \]
of isomorphism classes of $\theta$-stable $A$-modules of dimension vector $v$. There exists a universal family on $\CM(A,v,\theta)$, that is, a tautological locally-free sheaf 
\[T = \oplus_{i \in Q_0}T_i\] 
together with a $\kk$-algebra homomorphism $A \to \End(T)$ such that $\rk(T_i)=v_i$ for $0 \leq i \leq n$. 

In fact, the sheaf $T$ is defined only up to tensor product with an invertible sheaf \cite[Section~2]{craw2018multigraded}. But this ambiguity can be removed by choosing once and for all a vertex of the
quiver that we denote $0 \in Q_0$ and working only with dimension vectors $v$ satisfying $v_0= 1$; then
we normalise $T$ by fixing $T_0$ to be the trivial line bundle. 

With this choice, an $A$-module $M$ is said to be \emph{0-generated}, if there exists a surjective $A$-module homomorphism $Ae_0 \to M$. It follows from the definition that for stability parameters $\theta$ such that $\theta_i > 0$ for $i\neq 0$ all $\theta$-stable $A$-modules are 0-generated. We will use such parameters in our paper.

\subsection{Cornering}

We now obtain new algebras from $A$ as in \cite[Section~3]{craw2018multigraded}. Let 
\[I \subseteq \{0, 1,\dots, n\}\] be any nonempty subset. 
Define the
idempotent 
\[e_I \coloneqq \sum_{i\in I} e_i\] of $A$ and the $\kk$-algebra 
\[A_I \coloneqq e_I Ae_I.\] 
If the dimension vector $v_I = (v_i)_{i \in I}$ is also indivisible, we get a quiver variety \[\mathcal{M}(A_I,v_I) \coloneqq \mathcal{M}(A_I, v_I, \theta_I)\] where $\theta_I$ is a suitable stability parameter from $\Theta_{v_I}$.
The process of passing from $A$ to $A_I$ is called \emph{cornering} the algebra $A$. As we want to keep track of 0-generatedness, we only consider subsets $I$ that contain 0.

By \cite[Section~3]{craw2018multigraded}, for each pair of subsets 
\[\{0\} \subseteq I' \subseteq I \subseteq \{0, 1,\dots, n\}\]
and dimension vector $v_I \in \SN^I$ such that $v_I$ and $v_{I'}$ are both indivisible, 
there exists a universal \emph{cornering morphism} 
\[ \tau_{I,I'}: \mathcal{M}(A_I,v_I) \rightarrow \mathcal{M}(A_{I'},v_{I'}).\]


To simplify, we will write
\[ [i]=\{0,i\},\quad A_{[i]}=(e_0+e_i)A(e_0+e_i)
\]
and
\[ \tau_{[i]}=\tau_{\{0,\dots,n\},[i]}: \mathcal{M}(A,v) \to \mathcal{M}(A_{[i]},v_{[i]}) \]
for any $1 \leq i \leq n$.

\subsection{Multigraded linear series}
A particular case of the above setting is the following. Let $R$ be a finitely generated $\kk$-algebra and $Y$ be a projective $R$-scheme. Given
a collection $E_1,\dots, E_n$ of effective vector bundles on $Y$, define 
\[E \coloneqq \bigoplus_{0 \leq i \leq n} E_i\] 
where $E_0$ is the trivial line bundle on $Y$. Let 
\[A \coloneqq \End_Y (E)\] denote the endomorphism algebra of $E$ and
consider the dimension vector 
\[v \coloneqq (v_i), \quad v_i \coloneqq \rk(E_i),\quad 0 \leq i \leq n.\] 

It was observed in \cite[Proposition~2.3]{craw2018multigraded}, that $E$ is a flat family of 0-generated $A$-modules if and only if $E_i$ is globally generated for $1 \leq i \leq n$. In the examples in our paper we assume this and, as before, that $E_0$ is the trivial line bundle. The dimension vector 
\[v=(\rk(E_i))_{0 \leq i \leq n}\] is then indivisible. 

Choose a  stability parameter $\theta'$ such that \[\theta'_i = 1 \quad \textrm{for } i > 0.\]
Then $\theta'$ generic, and there is a smallest integer $j>0$ such that the ample line bundle $L_{\theta'}$
 provided by the GIT construction of $\CM(A,v,\theta')$ is very ample. Finally, set 
 \[\theta \coloneqq j \theta',\] which is again generic.
The \emph{multigraded linear series} of $E$ is the fine moduli space 
\[\CM(E)\coloneqq \CM(A,v,\theta)\]  of 0-generated $A$-modules
of dimension vector $v$ (see \cite[Definition~2.5]{craw2018multigraded}). 

Fix a nonempty subset $I \subseteq \{0,\dots,n\}$.
Since $A \simeq \End(E)$, the
locally-free sheaf 
\[E_I \coloneqq \bigoplus_{i \in I} E_i\] on $Y$ satisfies
\[A_I \simeq \End(E_I).\]
The corresponding multigraded linear series is \[\mathcal{M}(E_I)=\mathcal{M}(E_I, v_I, \theta_I),\] where $v_I = (\rk(E_i))_{i \in I}$ and $\theta_I$ is a suitable 0-generated stability parameter from $\Theta_{v_I}$. Again, for each pair of subsets 
\[\{0\} \subseteq I' \subseteq I \subseteq \{0, 1,\dots, n\}\]
there exists a universal cornering morphism 
\[ \tau_{I,I'}: \mathcal{M}(E_I) \rightarrow \mathcal{M}(E_{I'}).\]



Again, we will simply write for any $1 \leq i \leq n$ 
\[ [i]=\{0,i\},\quad E_{[i]}=E_0\oplus E_i 
\]
and
\[ \tau_{[i]}=\tau_{\{0,\dots,n\},[i]}: \mathcal{M}(E) \to \mathcal{M}(E_{[i]}) \]
for the cornering morphism.



\section{Slicing up $A$-modules}
\label{sec:reconstr}

\subsection{Recollement}

Denote by $A$-mod the category of all finitely presented $A$-modules.
Consider for $1 \leq i \leq n$ the functors
  \[
\begin{tikzpicture}
\node (v1) at (0,0)  {$\scriptsize{\alg\module}$};
\node (v2) at (4,0)  {$\scriptsize{\alg_{[i]}\module}$};

\draw [->] (v1) to node[above] {$\scriptstyle{j_{[i]}^{\ast}}$} (v2);
\draw [->,bend right=20] (v2) to node[above] {$\scriptstyle{j_{[i]\ast}}$} (v1);
\draw [->,bend left=20] (v2) to node[below] {$\scriptstyle{j_{[i]!}}$} (v1);
\end{tikzpicture}
\]
defined by $j_{[i]\ast}(-)\coloneqq \mathrm{Hom}_{\alg_{[i]}}(\alg,-)$, $j_{[i]}^{\ast}(-)\coloneqq (e_{0}+e_i)\alg\otimes_{\alg} (-)$ and $j_{[i]!}(-)\coloneqq \alg(e_{0}+e_i)\otimes_{\alg_{[i]}} (-)$. These are three of the six functors in a recollement of the category $\alg\module$ \cite{craw2018multigraded}. 


We need the following technical result about this adjunction setup.

\begin{lemma} \label{lem_universal_morph}
\begin{enumerate} 
\item Let $M\in\alg_{[i]}\module$. There is a canonical $\alg$-module homomorphism 
\[ \nu(M)\colon j_{[i]!} M \to j_{[i]\ast} M,\]
which is natural in $M$. 
\item Let $F\in \alg\module$, and $M=j_{[i]}^\ast F$. The homomorphism $\nu({j_{[i]}^\ast F})$ from (1) agrees with the composition
\[ j_{[i]!} j_{[i]}^\ast F\to F \to j_{[i]\ast} j_{[i]}^\ast F
\]
of the natural homomorphisms arising from the adjunctions.
\end{enumerate}
\end{lemma}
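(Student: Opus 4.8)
The plan is to derive both statements formally from the two adjunctions $j_{[i]!}\dashv j_{[i]}^*\dashv j_{[i]*}$ that are part of the recollement, so that the only inputs are the unit--counit calculus and the two triangle identities; the explicit bimodule formulas then serve merely as a check. Suppressing the subscript $[i]$, write $j_!,j^*,j_*$, abbreviate $e=e_0+e_i$ and $B=A_{[i]}$, and let $\eta\colon\mathrm{id}\to j^*j_!$ and $\delta\colon j_!j^*\to\mathrm{id}$ be the unit and counit of $j_!\dashv j^*$, and $\mu\colon\mathrm{id}\to j_*j^*$ and $\epsilon\colon j^*j_*\to\mathrm{id}$ those of $j^*\dashv j_*$. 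The first point I would record is that in a recollement $j_!$ and $j_*$ are fully faithful, so $\eta$ and $\epsilon$ are natural isomorphisms; this is exactly what makes the map below canonical.

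For part (1) I would define $\nu(M)$ as the image of $\mathrm{id}_M$ under the chain of natural bijections
\[
\Hom_A(j_!M,\,j_*M)\;\xrightarrow{\ \sim\ }\;\Hom_B(M,\,j^*j_*M)\;\xrightarrow{\ \epsilon_M\circ-\ }\;\Hom_B(M,M),
\]
in which the first arrow is the adjunction $j_!\dashv j^*$ and the second is post-composition with the isomorphism $\epsilon_M$. Concretely this characterises $\nu(M)$ as the unique $A$-homomorphism satisfying $\epsilon_M\circ j^*(\nu(M))\circ\eta_M=\mathrm{id}_M$. Naturality in $M$ is then automatic from naturality of the adjunction bijection and of $\epsilon$, so no separate verification is required. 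If one wants it on the nose, unwinding the standard recollement descriptions $j_!M\cong Ae\otimes_B M$ and $j_*M\cong\Hom_B(eA,M)$ gives the map $ae\otimes m\mapsto\big(ea'\mapsto(ea'ae)\cdot m\big)$, and well-definedness over $\otimes_B$ together with $A$- and $B$-linearity is a routine check.

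For part (2) set $M=j^*F$ and compare the two $A$-homomorphisms $j_!j^*F\to j_*j^*F$, namely $\nu(j^*F)$ and the composite $\mu_F\circ\delta_F$. Since $j_!\dashv j^*$, it suffices to compare their images under the adjunction bijection $g\mapsto j^*(g)\circ\eta_{j^*F}$. For the first, the characterisation above gives $j^*(\nu(j^*F))\circ\eta_{j^*F}=\epsilon_{j^*F}^{-1}$. For the second, functoriality and the triangle identity $j^*(\delta_F)\circ\eta_{j^*F}=\mathrm{id}$ for $j_!\dashv j^*$ reduce $j^*(\mu_F\circ\delta_F)\circ\eta_{j^*F}$ to $j^*(\mu_F)$, and the triangle identity $\epsilon_{j^*F}\circ j^*(\mu_F)=\mathrm{id}$ for $j^*\dashv j_*$ identifies this with $\epsilon_{j^*F}^{-1}$ as well. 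As both images coincide, injectivity of the bijection yields $\nu(j^*F)=\mu_F\circ\delta_F$.

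I do not expect a genuine obstacle here: the content is entirely the bookkeeping of the two triangle identities, and the only points needing care are confirming that the recollement indeed renders $\epsilon$ an isomorphism (so that $\epsilon_M^{-1}$ makes sense) and, should the explicit route be preferred, checking that the concrete formulas for $\delta_F$, $\mu_F$ and $\nu$ compose to the same map $ae\otimes ef\mapsto(ea'\mapsto ea'aef)$.
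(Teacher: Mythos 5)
Your proof is correct, but it takes a genuinely different route from the paper's. The paper defines $\nu(M)$ by the explicit formula~\eqref{eq:def:adjhom}, namely $pe\otimes m\mapsto\bigl(a\mapsto(eape)\cdot m\bigr)$ on $\alg e\otimes_{\alg_{[i]}}M\to\Hom_{\alg_{[i]}}(\alg,M)$, checks $\alg$-linearity and naturality by hand, and proves (2) by writing out the two adjunction maps element-wise and observing that their composite reproduces this formula. You instead characterise $\nu(M)$ as the unique map whose adjoint transpose under $j_{[i]!}\dashv j_{[i]}^*$ equals $\epsilon_M^{-1}$, and deduce (2) purely from the two triangle identities together with injectivity of the adjunction bijection; that bookkeeping is carried out correctly. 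The one input you lean on without proof is that the counit $\epsilon\colon j_{[i]}^*j_{[i]*}\to\mathrm{id}$ (and the unit of the other adjunction) is invertible; this does hold here and is worth recording concretely, since $j_{[i]}^*j_{[i]!}M\cong e\alg e\otimes_{\alg_{[i]}}M\cong M$ and $j_{[i]}^*j_{[i]*}M\cong\Hom_{\alg_{[i]}}(e\alg e,M)\cong M$ with $e=e_0+e_i$, rather than being left as an appeal to the word ``recollement''. The formal route buys generality (it works verbatim whenever $j_!$ and $j_*$ are fully faithful) and removes the element chase from part (2); the paper's concrete route has the advantage of producing the explicit formula that is actually used in the fibrewise identification in the proof of Theorem~\ref{thm:closedimm}. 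Since you also note that your abstract $\nu$ unwinds to the same formula $ae\otimes m\mapsto(ea'\mapsto(ea'ae)\cdot m)$, the two definitions agree and nothing downstream is affected.
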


\begin{proof} Given $M\in\alg_{[i]}\module$, to show (1) we need to define a homomorphism of $\alg$-modules
\[ \nu_M\colon \alg(e_{0}+e_i)\otimes_{\alg_{[i]}} M \to \mathrm{Hom}_{\alg_{[i]}}(\alg,M).
\]
We define $\nu_M$ for $p\in \alg, m\in M$ and $a\in \alg$ by 
\begin{equation} p(e_{0}+e_i) \otimes m \mapsto \left( a\mapsto ((e_{0}+e_i)  a p (e_{0}+e_i)) \cdot m\right),
\label{eq:def:adjhom}
\end{equation}
extending this to sums of tensors linearly. 
This formula defines a homomorphism of left $\alg$-modules: the $\alg$-action on the left is multiplication on $p$ on the left; the $\alg$-action on the right results in multiplication on $a$ {\em on the right}. These clearly give the same result. It is also clear that the resulting homomorphism is natural in $M$.

For (2), we need to check that for $M=j_{[i]}^\ast F$, the homomorphism $\nu_{j_{[i]}^\ast F}$ agrees with the homomorphism coming from the adjunctions. The latter homomorphisms are
\[\alg(e_{{0}} + e_{i}) \otimes_{\alg_{[i]}} (e_{{0}}+e_i)\alg\otimes F\to F\]
given by
\[p(e_{0}+e_i) \otimes (e_{0}+e_i)\otimes f \mapsto p(e_{0}+e_i) \cdot f \]

and
\[F\to \mathrm{Hom}_{\alg_{[i]}}(\alg,(e_{0}+e_i)\alg\otimes_{\alg} F)
\]
given by
\[p(e_{0}+e_i) \cdot f \mapsto (a\mapsto (e_{0}+e_i)ap(e_{0}+e_i) \cdot f). \]
The composite of these two clearly agrees with~\eqref{eq:def:adjhom}. 
\end{proof}

For $F\in \alg\module$, combining for different $i\in I$ the homomorphisms from the adjunctions gives rise to canonical $\alg$-module homomorphisms
\[ \psi_F: \bigoplus_{i\in I}j_{[i]!} (j_{[i]}^{\ast}(F)) \to F\]
and
\[ \phi_F: F \to \bigoplus_{i\in I}j_{[i]\ast} (j_{[i]}^{\ast}(F)), \]
which are natural in $F$.
The following proposition will be crucial.

\begin{proposition}\label{prop:PiJembeds} 
 For any $F \in \alg\module$, the homomorphism $\psi_F$ is surjective, while the homomorphism $\phi_F$ is injective. In particular, 
\[ F\cong \mathrm{im} (\phi_F\circ \psi_F),\]
the image of the natural homomorphism $\phi_F\circ \psi_F$.
\end{proposition}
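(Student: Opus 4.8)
The plan is to prove the two claims (surjectivity of $\psi_F$, injectivity of $\phi_F$) separately, and then deduce the isomorphism $F \cong \operatorname{im}(\phi_F \circ \psi_F)$ as a formal consequence. The central structural fact I would rely on is that $I = \{0, 1, \dots, n\}$, so that the idempotents $e_i$ for $i \in I$ are \emph{all} the vertex idempotents of the quiver, whence $\sum_{i \in I} e_i = 1_{\alg}$. This is the identity that makes the cornered pieces $\alg_{[i]}$ genuinely reassemble into $\alg$, rather than only capturing a proper corner. I expect every step to hinge on decomposing the identity as $1 = \sum_i e_i$ and tracking how the adjunction units and counits from Lemma~\ref{lem_universal_morph} act on the resulting direct-sum decomposition of each module.

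\textbf{Surjectivity of $\psi_F$.}

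First I would analyse $\psi_F$ one summand at a time. For each $i$, the map $j_{[i]!}(j_{[i]}^{\ast}F) \to F$ is the counit of the adjunction $(j_{[i]!}, j_{[i]}^{\ast})$; concretely, as computed in Lemma~\ref{lem_universal_morph}(2), an element $p(e_0 + e_i) \otimes (e_0 + e_i) f$ maps to $p(e_0 + e_i)\cdot f$. So the image of the $i$-th summand is $\alg(e_0 + e_i)\cdot F = \alg e_i F + \alg e_0 F$. Summing over $i \in I$, the total image contains $\sum_{i \in I} \alg e_i \cdot F = \alg\bigl(\sum_{i} e_i\bigr) F = \alg \cdot F = F$, using $\sum_i e_i = 1$. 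Hence $\psi_F$ is surjective. I would phrase this cleanly by noting that $e_i F \subseteq \operatorname{im}(\text{$i$-th counit})$ for each $i$ and that $F = \bigoplus_i e_i F$ as $\kk$-vector spaces.

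\textbf{Injectivity of $\phi_F$.}

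Dually, $\phi_F$ is assembled from the units $F \to j_{[i]\ast}(j_{[i]}^{\ast}F) = \operatorname{Hom}_{\alg_{[i]}}(\alg, (e_0 + e_i)\alg \otimes_{\alg} F)$, which by Lemma~\ref{lem_universal_morph}(2) send $f \mapsto (a \mapsto (e_0 + e_i)\, a\, f)$. To show the product over $i \in I$ is injective, I take $f \in \ker \phi_F$ and evaluate the $i$-th component at $a = e_i$: this forces $(e_0 + e_i) e_i f = e_i f = 0$ for every $i$ (the $i=0$ component, evaluated at $a = e_0$, kills $e_0 f$). Since $f = \sum_{i \in I} e_i f$ and each $e_i f = 0$, we get $f = 0$. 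Again the decomposition $1 = \sum_i e_i$ does the work.

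\textbf{The isomorphism and the main obstacle.}

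Finally, for $F \cong \operatorname{im}(\phi_F \circ \psi_F)$: since $\psi_F$ is surjective, $\operatorname{im}(\phi_F \circ \psi_F) = \operatorname{im}(\phi_F|_{\operatorname{im}\psi_F}) = \operatorname{im}(\phi_F) = \phi_F(F)$, and since $\phi_F$ is injective this image is isomorphic to $F$ via $\phi_F$ itself. I anticipate the main obstacle is \emph{not} the algebra of idempotents but the bookkeeping in identifying the abstract adjunction unit/counit with the explicit formulas, i.e.\ making sure that the natural transformation assembled in the statement of Proposition~\ref{prop:PiJembeds} really is $\bigoplus_i$ of the maps described in Lemma~\ref{lem_universal_morph}(2), with the correct source $(e_0 + e_i)\alg \otimes_{\alg} F$ rather than a naive $e_i F$. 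Once that identification is pinned down, evaluating at the specific algebra element $a = e_i$ is the one clever choice needed, and everything else reduces to the partition-of-unity computation $\sum_{i \in I} e_i = 1_{\alg}$.
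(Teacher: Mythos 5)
Your proof is correct, and the injectivity half of it is essentially identical to the paper's (evaluate each component of $\phi_F(f)$ at the idempotents $e_0,e_i$ and sum up using $\sum_j e_j = 1$). The surjectivity half, however, takes a genuinely more direct route. The paper first reduces to the regular module $F=\alg$, constructs an explicit right-$\alg$-module section $\Rho$ of $\Psi=\psi_{\alg}$ (sending $e_j\alg$ into the $j$-th summand and splitting $e_0\alg$ evenly across all summands), verifies $\Psi\circ\Rho=\mathrm{id}_{\alg}$, and then deduces the general case by right-exactness of $-\otimes_{\alg}F$. You instead observe directly that the image of the $i$-th counit $j_{[i]!}j_{[i]}^{\ast}F\to F$ is $\alg(e_0+e_i)\alg\, F$, which contains $e_iF$ and $e_0F$, so the total image contains $\sum_j e_jF=F$. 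This is shorter and avoids both the reduction step and the exactness argument; what it loses is the explicit bimodule-level splitting $\Rho$ of $\Psi$, which is a slightly stronger structural statement (the $\widetilde{\alg}_{[i]}$ jointly split $\alg$ as a right module) even though the paper does not reuse it later. Two cosmetic points: the index set of the direct sums in $\psi_F,\phi_F$ is $I=\{1,\dots,n\}$ in the paper rather than $\{0,\dots,n\}$ (the vertex $0$ is picked up through each $(e_0+e_i)$, so your partition-of-unity step is unaffected); and in the injectivity argument the values of the unit land in $(e_0+e_i)\alg\otimes_{\alg}F$, so one should either write $e_j\otimes f=0$ as the paper does or invoke the canonical isomorphism $(e_0+e_i)\alg\otimes_{\alg}F\cong(e_0+e_i)F$ before writing $e_jf=0$; neither affects the validity of the argument.
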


\begin{proof} To show the surjectivity of $\psi_F$, let us begin with the case $F=\alg\in\alg\module$. Write 
\begin{equation} 
\label{eq:Psidecomp}\widetilde{\alg}_{[i]}\coloneqq\alg(e_{{0}} + e_{i}) \otimes_{\alg_{[i]}} (e_{{0}}+e_i) \alg=j_{[i]!}(j_{[i]}^{\ast} (\alg))\end{equation} to simplify notation. 
Then the canonical left $\alg$-module homomorphism 
\[ \Psi\coloneqq\psi_{\alg}: \bigoplus_{i \in I} j_{[i]!}(j_{[i]}^{\ast} (\alg))= \bigoplus_{i \in I} \widetilde{\alg}_{[i]} \to \alg\]
can be written in the following explicit form. Any element of the source module is a sum of terms of the form
\[ (p_{m_1} \otimes q_{m_1},\dots, p_{m_k}\otimes q_{m_k}) \in \bigoplus_{i \in I} \widetilde{\alg}_{[i]}. \]
Then $\Psi$ maps such an element to $\sum_{s} p_{m_s}q_{m_s} \in \alg$ and extends this linearly.


For any $a\in \alg$, 
\[
\begin{gathered}
(p_{m_1} \otimes q_{m_1},\dots,p_{m_{k}}\otimes q_{m_{k}})a=(p_{m_1} \otimes q_{m_1}a,\dots,p_{m_{k}}\otimes q_{m_{k}}a) \\ \mapsto\sum_{s} p_{m_s}q_{m_s}a= \left(\sum_{s} p_{m_s}q_{m_s}\right)a .
\end{gathered}
\]
So $\Psi$ is in fact a bimodule homomorphism. 


To show the surjectivity of $\Psi$, consider the direct sum decomposition of the regular left module $_{\alg}{\alg}=e_{{0}}\alg  \oplus \bigoplus_{i \in I} e_i \alg$. Combine
\[
\begin{array}{ccc}
e_j\alg & \to & \bigoplus_{i \in I} \widetilde{\alg}_{[i]} \\
p & \mapsto & (0,\dots,0,(e_{0}+e_j)\otimes p,0,\dots,0)
\end{array}
\]
where $(e_{0}+e_j)\otimes p$ is at the $j$-th term of the target,
with
\[
\begin{array}{ccc}
e_{{0}} \alg & \to & \bigoplus_{i \in I} \widetilde{\alg}_{[i]} \\
p & \mapsto & \frac{1}{k}\left( (e_{0}+e_{m_1})\otimes p,\dots, (e_{0}+e_{m_{k}})\otimes p\right)
\end{array}\]
to obtain a linear map  \[ \Rho: \alg \to \bigoplus_{i \in I} \widetilde{\alg}_{[i]}.\] 
For any $a \in \alg$ and $p \in e_j\alg$, the product $pa$ is also in $e_j\alg$. Hence,
\[ \Rho(pa) =  (0,\dots,0,(e_{0}+e_j) \otimes pa,0,\dots,0) = (0,\dots,0,(e_{0}+e_j) \otimes p,0,\dots,0)a = \Rho(p)a.\]
Similarly, for $p \in e_{0}\alg$
\begin{align*} \Rho(pa) & = \frac{1}{k}\left( (e_{0}+e_{m_1})\otimes pa,\dots, (e_{0}+e_{m_{k}})\otimes pa\right) \\ & = \frac{1}{k}\left( (e_{0}+e_{m_1})\otimes p,\dots, (e_{0}+e_{m_{k}})\otimes p\right)a = \Rho(p)a. \end{align*}
Therefore $\Rho$ is a right $\alg$-module homomorphism. (We remark that it is not a $\alg$-bimodule homomorphism.)
It is straightforward to check that $\Psi\, \circ\, \Rho $ is the identity of $\alg$. We deduce that $\Psi$ is surjective as stated.

Next, for an arbitrary $F\in \alg\module$, we have 
\[ \bigoplus_{i \in I} j_{[i]!}(j_{[i]}^{\ast} (F))= \bigoplus_{i \in I} \alg(e_{{0}} + e_{i}) \otimes_{\alg_{[i]}} (e_{{0}}+e_i) \alg \otimes_{\alg} F= \bigoplus_{i \in I} \widetilde{\alg}_{[i]} \otimes_{\alg} F\]
and
\[ \psi_F = \Psi \otimes \mathrm{Id}_F: \bigoplus_{i \in I} \widetilde{\alg}_{[i]} \otimes_{\alg} F \to \alg\otimes_{\alg} F \cong F. \]
As the functor $-\otimes_{\alg} F$ is right exact, 
we deduce the surjectivity of $\psi_F$ for a general $F$.


Let us now consider $\phi_F$. 
The composition in the target can be written as
\[ \bigoplus_{i\in I}j_{[i]\ast} (j_{[i]}^{\ast}(F)) = \bigoplus_{i \in I} \mathrm{Hom}_{\alg_{[i]}}(\alg, (e_{{0}}+e_i)\alg \otimes_{\alg} F). \]
In this language, the map $\phi_F$ is the map
\[ 
\begin{array}{rc c l}
\phi_F\colon & F & \to &\bigoplus_{i \in I} \mathrm{Hom}_{\alg_{[i]}}(\alg, (e_{{0}}+e_i)\alg \otimes_{\alg} F)\\
& f &\mapsto &( a\mapsto (e_{{0}} + e_i)a \otimes f )_{i \in I}.
\end{array}
\]


Let $f \in F$ be such that \[ \phi_F(f)=( a\mapsto (e_{{0}} + e_i)a \otimes f )_{i \in I}=(0)_{i \in I}.\]
Take $a=e_j$ for $j \in I \cup \{{0}\}$ to see that in this case $e_j \otimes f=0$ for each $j$. But because $\{e_{j}\}_{j \in I \cup \{{0}\}}$ is a complete set of orthogonal idempotents of $\alg$, the sum
$\sum_j e_j \otimes f=1\otimes f=0$. This can only happen if $f=0$.
Thus $\phi_F$ is injective as claimed.
\end{proof}

\subsection{Closed immersion into the product}

We now apply Proposition~\ref{prop:PiJembeds} to prove our key result about multigraded linear series. 
The projective morphisms
\[ \tau_{[i]}\colon\mathcal{M}(A,v) \rightarrow \mathcal{M}(A_{[i]}, v_{[i]})\]
induce, by the universal property of the product, a projective morphism
\[ \tilde\tau\colon \mathcal{M}(A,v) \rightarrow\prod_{1 \leq i \leq n}\mathcal{M}(A_{[i]}, v_{[i]}).\]
\begin{theorem}
\label{thm:closedimm}
The morphism $\tilde\tau$
induces a closed immersion on the reduced scheme underlying $\mathcal{M}(A,v)$.
\end{theorem}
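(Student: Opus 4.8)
The plan is to show that $\tilde\tau$ is a \emph{monomorphism} on the level of points and tangent spaces, and then invoke properness together with the reducedness hypothesis to conclude that it is a closed immersion. Since each $\tau_{[i]}$ is projective, the induced product morphism $\tilde\tau$ is projective, hence proper; and a proper monomorphism is automatically a closed immersion. So the whole problem reduces to verifying injectivity on closed points and injectivity on Zariski tangent spaces, the latter being where the reduced-scheme hypothesis will do its work.

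For injectivity on closed points, I would use the modular interpretation. A closed point of $\mathcal{M}(E)$ is an isomorphism class of a $\theta$-stable $0$-generated $A$-module $F$ of dimension vector $v$, and its image under $\tau_{[i]}$ is the class of the corresponding $A_{[i]}$-module $j_{[i]}^\ast F = (e_0+e_i)F$, with the appropriate induced stability. The key input is Proposition~\ref{prop:PiJembeds}, which gives the canonical factorisation $F \cong \mathrm{im}(\phi_F \circ \psi_F)$ through the module $\bigoplus_{i\in I} j_{[i]!}(j_{[i]}^\ast F)$ on the source of $\psi_F$ and $\bigoplus_{i\in I} j_{[i]\ast}(j_{[i]}^\ast F)$ on the target of $\phi_F$. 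The essential point is that both of these outer modules, and the natural map $\phi_F\circ\psi_F$ between them, are built \emph{functorially} out of the individual corners $j_{[i]}^\ast F$ (this is exactly what the deleted commutative-diagram argument in the commented-out proposition was establishing). Concretely, if $F$ and $G$ are two such modules with $j_{[i]}^\ast F \cong j_{[i]}^\ast G$ as $A_{[i]}$-modules for every $i$, then choosing isomorphisms $H_{[i]}$ and taking $\bigoplus_i j_{[i]!}(H_{[i]})$ and $\bigoplus_i j_{[i]\ast}(H_{[i]})$ produces a commuting square whose vertical maps are isomorphisms and whose horizontal maps are $\phi_F\circ\psi_F$ and $\phi_G\circ\psi_G$. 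Restricting the right-hand vertical isomorphism to the images then yields $F\cong\mathrm{im}(\phi_F\circ\psi_F)\cong\mathrm{im}(\phi_G\circ\psi_G)\cong G$. Hence $\tilde\tau$ separates closed points.

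For injectivity on tangent spaces I would argue the same way, but with families over $\mathrm{Spec}\,\kk[\varepsilon]/(\varepsilon^2)$ in place of closed points. A tangent vector to $\mathcal{M}(E)$ at $[F]$ is a first-order flat deformation $\mathcal{F}$ of $F$ as a $0$-generated $A$-module, and its image in the product records the induced deformations $j_{[i]}^\ast\mathcal{F}$ of each corner. The functors $j_{[i]}^\ast, j_{[i]!}, j_{[i]\ast}$ all make sense for families, and since the reconstruction of Proposition~\ref{prop:PiJembeds} is natural, the functorial recovery argument above applies verbatim over the dual numbers: if two deformations induce isomorphic deformations of every corner compatibly with the reductions to $F$, the same diagram chase reconstructs an isomorphism of the deformations themselves. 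Thus $\mathrm{d}\tilde\tau$ is injective on tangent spaces as well.

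The main obstacle is the interaction of the reconstruction with the stability conditions and the fine-moduli formalism, rather than the module theory itself. Proposition~\ref{prop:PiJembeds} reconstructs $F$ as an \emph{abstract} $A$-module, but to conclude we must check that this reconstruction respects the extra data defining points of $\mathcal{M}(E)$ — the $0$-generation and the tautological trivialisation of the degree-$0$ component — and that the corner stability parameters $\theta_{[i]}$ are compatible with $\theta$ so that stable objects map to stable objects. This is precisely the reason the statement is phrased for the \emph{reduced} scheme: the tangent-space injectivity via dual numbers only controls the reduced structure, and passing to $\mathcal{M}(E)_{\mathrm{red}}$ lets us combine set-theoretic injectivity with injectivity of the induced maps on reduced tangent cones to obtain an unramified proper monomorphism, which is a closed immersion. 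I would therefore spend most of the care on tracking the $0$-generated condition through $\psi_F$ (whose surjectivity guarantees that the reconstructed module is still generated in degree $0$) and on confirming that the GIT stability is preserved under cornering, citing the relevant compatibility from \cite{craw2018multigraded}.
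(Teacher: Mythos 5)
Your reduction to ``proper $+$ injective on closed points $+$ injective on tangent spaces $\Rightarrow$ closed immersion'' is a legitimate general strategy, and your closed-point argument via Proposition~\ref{prop:PiJembeds} is in the spirit of what the paper does. But the paper's actual proof takes a different and, crucially, a stronger route that avoids tangent spaces altogether: it takes the reduced image $X\subset\prod_i\mathcal{M}(E_{[i]})$ of the projective morphism $\tilde\tau$ and \emph{constructs a morphism of schemes} $\epsilon\colon X\to\mathcal{M}(E)$ going back. This is done by assembling, from the universal bundles $T_{0,i}\oplus T_i$ on the factors, the bundle map $\widetilde\nu=\bigoplus_i\nu(T_{0,i}\oplus T_i)$ of Lemma~\ref{lem_universal_morph} over $X$, observing via Proposition~\ref{prop:PiJembeds} that its image sheaf is a flat family of $0$-generated $A$-modules of dimension vector $v$, and invoking the universal property of the fine moduli space $\mathcal{M}(E)$. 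Once one has mutually inverse \emph{morphisms} on closed points between reduced finite-type schemes over $\bar{\kk}$, one is done. Your proposal never produces this inverse morphism; it only produces a bijection of sets plus a claimed tangent-space statement.

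The genuine gap is the tangent-space step, and it cannot be waved away: set-theoretic injectivity plus properness plus reducedness do \emph{not} give a closed immersion (the normalization of a cuspidal cubic is a proper bijection of reduced varieties that is not a closed immersion), so your whole strategy stands or falls with injectivity on tangent spaces. You assert that the reconstruction ``applies verbatim over the dual numbers,'' but this is exactly where the argument is not routine. A tangent vector at $[F]$ is a flat family over $\kk[\varepsilon]$, and the map $\phi_F\circ\psi_F$ whose image recovers $F$ is a full matrix of components $\eta_i\circ\epsilon_j$; only its diagonal entries $\nu(j_{[i]}^{\ast}F)=\eta_i\circ\epsilon_i$ are functorial in the corners $j_{[i]}^{\ast}F$ alone (Lemma~\ref{lem_universal_morph}(1)), while the off-diagonal entries involve $F$ itself, so your commuting square with horizontal arrows $\phi_F\circ\psi_F$ and $\phi_G\circ\psi_G$ does not commute for free from the chosen corner isomorphisms $H_{[i]}$. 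Moreover, were your dual-number argument to go through, it would show $\tilde\tau$ is unramified and radicial, hence a closed immersion of $\mathcal{M}(E)$ itself --- strictly more than the theorem claims --- which should make you suspicious; and your closing appeal to ``injectivity on reduced tangent cones'' is not a criterion that yields closed immersions. To repair the proposal you would either have to genuinely prove injectivity of $\operatorname{Ext}^1_A(F,F)\to\bigoplus_i\operatorname{Ext}^1_{A_{[i]}}(j_{[i]}^{\ast}F,j_{[i]}^{\ast}F)$ (not a consequence of Proposition~\ref{prop:PiJembeds}), or switch to the paper's tactic of building the inverse morphism out of the image via the fine-moduli universal property.
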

\begin{proof}


Let 
\[X=\mathop{\rm im}\tilde\tau \subset\prod_{1 \leq i \leq n}\mathcal{M}(A_{[i]}, v_{[i]})\] denote
the reduced image of $\tilde\tau$. This is a closed subvariety as $\tilde\tau$ is projective. We will show that $X$ is indeed isomorphic to the reduced scheme underlying $\mathcal{M}(A,v)$.


For $i\in I$, consider the universal bundle \[T_{{0},i} \oplus T_i\] on the fine moduli space 
$\mathcal{M}(A_{[i]}, v_{[i]})$, a sheaf of 
0-generated 
$\alg_{[i]}$-modules. Applying Lemma~\ref{lem_universal_morph}~(1) to this family, we
get a morphism
\[ \tilde\nu_i\colon j_{[i]!} (T_{{0},i} \oplus T_i) \to j_{[i]\ast} (T_{{0},i} \oplus T_i),
\]
of sheaves of $\alg$-modules on $\mathcal{M}(A_{[i]}, v_{[i]})$.

Pulling these bundles and morphisms back to the product 
\[\prod_{1 \leq i \leq n}\mathcal{M}(A_{[i]}, v_{[i]})\] 
and taking a direct sum, we get a morphism
\[ \widetilde\nu \colon \bigoplus_{1 \leq i \leq n} j_{[i]!} (T_{{0},i} \oplus T_i) \to \bigoplus_{1 \leq i \leq n} j_{[i]\ast} (T_{{0},i} \oplus T_i)
\]
of $\alg$-module bundles over $\prod_{1 \leq i \leq n}\mathcal{M}(A_{[i]}, v_{[i]})$, where we omit the pullback from the notation. Finally restrict this bundle morphism to the closed subscheme
\[X\subset \prod_{1 \leq i \leq n}\mathcal{M}(A_{[i]}, v_{[i]})\]
to get
\[ \widetilde\nu|_{X} \colon \bigoplus_{1 \leq i \leq n} j_{[i]!} (T_{{0},i} \oplus T_i)|_{X} \to \bigoplus_{1 \leq i \leq n} j_{[i]\ast} (T_{{0},i} \oplus T_i)|_{X}.
\]
{\balazs Consider the restriction 
\[S = \bigoplus_{1 \leq i \leq n} (T_{{0},i} \oplus T_i)|_{X}\]
of the direct sum bundle to $X=\mathop{\rm im}\tilde\tau$. 
Over a closed point $p\in X$, the fibre $S_p$ consists of a collection $\{F_{pi}\}$ of 0-generated 
$\alg_{[i]}$-modules of dimension vector $(1,v_i)$. Because $p$ is in the image of $\tilde\tau$, there exists at least one (not necessarily unique) 0-generated  
$\alg$-module $F_p$ of 
dimension vector $v$, such that $\tau_{[i]}(F_p) = F_{pi}$ for all $i$. 
On the other hand, if such an $F_p$ exists, then by  Lemma~\ref{lem_universal_morph}~(2) and Proposition~\ref{prop:PiJembeds}, at $p\in X$ the morphism $\widetilde\nu|_{p}$ has image exactly $F_p$. In particular, $F_p$ is the unique 0-generated 
$\alg$-module with the property that $\tau_{[i]}(F_p) = F_{pi}$ for all $i$. It follows that the image sheaf ${\rm Im}\ \widetilde\nu|_{X}$ is a family of 0-generated 
$\alg$-modules of dimension vector $v$.}
By the universal property of $\mathcal{M}(A,v)$, we then obtain a morphism \[\epsilon\colon X\to \mathcal{M}(A,v)\] which induces this family. 
By the uniqueness part of the argument just given,
$\epsilon\circ\tilde\tau$ is the identity on closed points
of $\mathcal{M}(A,v)$. The map $\tilde\tau\circ\epsilon$ is the identity of closed points 
on~$X$ by construction. Hence indeed $X$ is isomorphic to the reduced subscheme underlying $\mathcal{M}(E)$ as claimed.
\end{proof}

More generally, let $I_1,\dots,I_l$ be a covering of the set $\{0,\dots,n\}$ such that each $I_i$ contains 0. Again, the  morphisms
\[ \tau_{\{0,\dots,n\},I_i}: \mathcal{M}(A,v) \to \mathcal{M}(A_{I_i},v_{I_i})\]
induce a projective morphism
\[ \tau :\mathcal{M}(A,v) \to \prod_{i=1}^l \mathcal{M}(A_{I_i},v_{I_i}). \]
\begin{corollary}
\label{cor:corneredquiver}
The morphism $\tau$ induces a closed immersion on the reduced scheme underlying $\mathcal{M}(A,v)$.
\end{corollary}
\begin{proof}
{\adam For each $1 \leq j \leq n$, choose an index $i(j)\in \{1,\dots,l\}$ such that $j\in I_{i(j)}$, which is possible since $\{I_1,\dots,I_l\}$ covers $\{0,\dots,n\}$ and each $I_i$ contains $0$. Then the inclusion $[j]=\{0,j\}\subset I_{i(j)}$ induces a projective morphism
\[
\tau_{I_{i(j)},[j]} \colon \mathcal{M}(A_{I_{i(j)}},v_{I_{i(j)}})\to \mathcal{M}(A_{[j]},v_{[j]}).
\]
Taking the product over $j$ gives a morphism
\[
\rho \colon \prod_{i=1}^l \mathcal{M}(A_{I_i},v_{I_i})
\longrightarrow
\prod_{1\leq j\leq n}\mathcal{M}(A_{[j]},v_{[j]}),
\]
such that
\[
\tilde\tau=\rho\circ\tau,
\]
where $\tilde\tau$ is the morphism of Theorem~\ref{thm:closedimm}. Since $\tilde\tau$ induces a closed immersion on the reduced scheme underlying $\mathcal{M}(A,v)$, the same holds for $\tau$, because composition with $\rho$ cannot identify two distinct points in the image of $\tau$. Hence $\tau$ induces a closed immersion on the reduced scheme underlying $\mathcal{M}(A,v)$.}
\end{proof}


Specially, for a multigraded linear series $\mathcal{M}(E)$ the morphisms
\[ \tau_{\{0,\dots,n\},I_i}: \mathcal{M}(E) \to \mathcal{M}(E_{I_i})\]
give rise to a projective morphism
\[ \tau :\mathcal{M}(E) \to \prod_{i=1}^l \mathcal{M}(E_{I_l}). \]
\begin{corollary}
\label{cor:corneredmgls}
The morphism $\tau$ induces a closed immersion on the reduced scheme underlying $\mathcal{M}(E)$.
\end{corollary}

{\adam 
\begin{remark}
A special case of the closed immersion of Corollary \ref{cor:corneredmgls} have appeared in the literature for the case when $Y$ is a Mori Dream Space and each $E_i$ has rank one. For any collection of  globally generated
line bundles $\mathcal{L} = (\mathcal{O}_Y
, E_1, \dots , E_n)$, the image of the closed immersion $ \varphi_{|\mathcal{L}|}:\mathcal{M}(E) \to |\mathcal{L}|$ from \cite[Theorem~3.7]{craw2013mori} is cut out by the relations in the quiver. So the morphism \[\prod_{1 \leq i \leq n} \varphi_{|E_i|} : \mathcal{M}(E) \to \prod_{1 \leq i \leq n}|E_i|\] is a closed immersion with image in \[\prod_{1 \leq i \leq n} \mathcal{M}(O_Y \oplus E_i).\] This also provides a geometric intuition for our the closed immersion above.
\end{remark}

}

\section{Examples}
\label{sec:appl}

\subsection{Modules over the reconstruction algebra}

In this section we work over the complex numbers. Let $G < \GL(2,\SC)$ be a finite subgroup that acts without pseudo-reflections. Let $\mathrm{Irr}(G)$ denote the set of isomorphism classes of irreducible representations of $G$. There is a tautological bundle
\[ T=\bigoplus_{\rho \in \mathrm{Irr}(G)} T_{\rho}^{\oplus \mathrm{dim}(\rho)} \]
on the corresponding $G$-Hilbert scheme, the fine module space of $G$-equivariant coherent sheaves of the form $\mathcal{O}_Z$ for $Z \subset \mathbb{A}^2$ such that $\Gamma(\mathcal{O}_Z)$ is isomorphic to the regular representation of $G$. Each summand $T_{\rho}$ of this bundle is globally generated. In this context, the trivial representation corresponds to the trivial line bundle.

Recall that an irreducible representation $\rho \in \mathrm{Irr}(G)$ is called special if $H^1(T_{\rho}^{\vee})=0$. Performing cornering along the set
\[ \mathrm{Sp} \coloneqq \{ \rho \in  \mathrm{Irr}(G) \,\vert\, \rho \textrm{ is special }\}\]
one obtains the reconstruction bundle 
\[ T_{\mathrm{Sp}} \coloneqq {\balazs \bigoplus_{\rho \in \mathrm{Sp}}T_{\rho}}\]
and the reconstruction algebra
\[ B \coloneqq \End(T_{\mathrm{Sp}}) = \End(T)_{\mathrm{Sp}} = e\End(T)e\]
where $e = \sum_{\rho \in \mathrm{Sp}} e_{\rho}$.
\begin{theorem}[{\cite[Theorem~4.4]{craw2018multigraded}}]
\begin{enumerate}
    \item The minimal resolution of the quotient singularity $\mathbb{A}^2/G$ is isomorphic to the multigraded linear series $\mathcal{M}(T_{\mathrm{Sp}})$. In particular, $\mathcal{M}(T_{\mathrm{Sp}})$ is reduced.
    \item All partial resolutions of $\mathbb{A}^2/G$ which  the minimal resolution factors through is of the form  $\mathcal{M}(T_I)$ for some $I \subset \mathrm{Sp}$.
\end{enumerate}
\end{theorem}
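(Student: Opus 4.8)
The plan is to realise the minimal resolution directly as a moduli space of modules over the reconstruction algebra $B = \End(T_{\mathrm{Sp}})$, and then to match that moduli space with the multigraded linear series $\mathcal{M}(T_{\mathrm{Sp}})$. Let $\pi\colon X \to \mathbb{A}^2/\Gamma$ be the minimal resolution. By Wunram's special McKay correspondence, the nontrivial special representations $\rho \in \mathrm{Sp}\setminus\{\rho_0\}$ are in bijection with the exceptional prime divisors $E_\rho$ of $\pi$, each carrying a special full (reflexive) sheaf $\mathcal{R}_\rho$ of rank $\dim\rho$ with $c_1(\mathcal{R}_\rho)\cdot E_{\rho'} = \delta_{\rho\rho'}$. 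By the work of Van den Bergh and Wemyss, the bundle $\mathcal{R} = \bigoplus_{\rho\in\mathrm{Sp}}\mathcal{R}_\rho$ is a tilting bundle on $X$ whose endomorphism algebra is precisely $B$, so that $R\!\Hom_X(\mathcal{R},-)$ induces a derived equivalence $D^b(X)\simeq D^b(B)$.

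First I would use this equivalence together with King stability to exhibit $X$ itself as the fine moduli space of $\theta$-stable $B$-modules of dimension vector $v_{\mathrm{Sp}} = (\dim\rho)_{\rho\in\mathrm{Sp}}$ for the $0$-generated stability parameter. Concretely, under the equivalence the structure sheaves of the points of $X$ go to the $0$-generated $B$-modules that the stability condition selects, and the tautological family on the moduli space matches the summands of $\mathcal{R}$. Since $X$ is smooth, the moduli functor it represents is smooth, hence reduced; comparing universal families then gives the isomorphism $\mathcal{M}(T_{\mathrm{Sp}})\cong X$ together with the reducedness asserted in (1). Here one has to check that the $0$-generated stability parameter used to define $\mathcal{M}(T_{\mathrm{Sp}})$ lies in the chamber whose moduli space is $X$ rather than some other birational model; this is where the speciality condition $H^1(T_\rho^\vee)=0$ enters, guaranteeing that no spurious contractions occur.

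For (2), I would combine the bijection above with the standard description of partial resolutions dominated by the minimal resolution of a surface singularity. By Grauert's contractibility criterion any subset of the exceptional prime divisors can be contracted, since the intersection form restricted to any subconfiguration is negative definite, and every partial resolution $X \to Z \to \mathbb{A}^2/\Gamma$ through which $\pi$ factors arises by contracting a uniquely determined subset of the $E_\rho$. Translating this subset through Wunram's bijection produces a subset $I\subset\mathrm{Sp}$ with $\rho_0\in I$, and one shows that $Z \cong \mathcal{M}(T_I)$: the surjectivity of the cornering morphism from \cite[Proposition~3.7]{craw2018multigraded} supplies a morphism $X\to\mathcal{M}(T_I)$, and the contraction $X\to Z$ factors through it because cornering to $B_I = e_I B e_I$ forgets exactly the summands $\mathcal{R}_\rho$ with $\rho\notin I$, collapsing the corresponding curves $E_\rho$.

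The main obstacle will be this last identification: proving that the algebraic operation of cornering to $B_I$ corresponds geometrically to contracting precisely the curves indexed by $\mathrm{Sp}\setminus I$, and that $\mathcal{M}(T_I)$ is a normal surface with exactly the expected exceptional locus rather than a partial normalisation or a space carrying embedded components. This requires a careful analysis of how $0$-generated $B_I$-modules deform and degenerate across the walls of the stability space, controlling the module structure of the cornered modules well enough to identify their moduli with the geometric partial resolution $Z$.
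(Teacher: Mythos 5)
This statement is imported verbatim from \cite[Theorem~4.4]{craw2018multigraded}; the paper under review gives no proof of it, so there is nothing internal to compare your argument against. Your outline is a fair reconstruction of the strategy actually used in that reference and its sources (Wunram's special McKay correspondence, the identification of the reconstruction algebra $B=\End(T_{\mathrm{Sp}})$ with the endomorphism algebra of the tilting bundle of special sheaves on the minimal resolution, the realisation of the minimal resolution as the fine moduli space of $0$-generated stable $B$-modules, and the matching of cornering with contraction of exceptional curves for part~(2)). It remains a sketch rather than a proof --- the two points you flag yourself, namely that the $0$-generated parameter lies in the correct chamber and that $\mathcal{M}(T_I)$ is precisely the geometric contraction $Z$ and not some other birational or non-reduced model, are exactly the technical content of the cited theorem --- but the route is the standard one and contains no wrong turns.
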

It is known that the set of invertible sheaves 
\[ \{\det(T_{\rho})\,\vert\, \rho \in  \mathrm{Sp} \setminus \rho_0\}\]
provides an integral basis of $\mathrm{Pic}(\mathcal{M}(T_{\mathrm{Sp}}))$ dual to the curve classes defined by the irreducible  components of the exceptional divisor. The cornering morphism 
\[\mathcal{M}(T_{\mathrm{Sp}}) \to \mathcal{M}(T_{[\rho]})\]  contracts every exceptional curve except for the one corresponding to $\rho$.
In this setting, our Theorem~\ref{thm:closedimm} implies that
the product of the cornering morphisms gives a closed immersion.
\begin{corollary} 
\label{cor:partresclimm}
The product of the cornering morphisms
\[\mathcal{M}(T_{\mathrm{Sp}}) \to \prod_{\rho \in \mathrm{Sp} \setminus \rho_0} \mathcal{M}(T_{[\rho]})\]
is a closed immersion.
\end{corollary}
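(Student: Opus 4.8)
The plan is to recognise this corollary as a direct specialisation of the main Theorem~\ref{thm:closedimm}. The reconstruction bundle $T_{\mathrm{Sp}}=\bigoplus_{\rho\in\mathrm{Sp}}T_{\rho}$ takes the role of $E$ in the general framework: the trivial representation $\rho_0$ supplies the trivial line bundle summand $T_{\rho_0}=E_0=\CO_Y$, while the remaining summands $\{T_{\rho}:\rho\in\mathrm{Sp}\setminus\rho_0\}$ play the role of the nontrivial globally generated bundles $E_1,\dots,E_n$. Under this dictionary the elementary cornered bundle $T_{[\rho]}=T_{\rho_0}\oplus T_{\rho}$ matches $E_{[i]}$, and the product of cornering morphisms appearing in the statement is exactly the morphism $\tilde\tau$ of Theorem~\ref{thm:closedimm}.

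First I would confirm that the hypotheses underlying the general construction hold in this setting. Each $T_{\rho}$ with $\rho\in\mathrm{Sp}\setminus\rho_0$ is a nontrivial globally generated vector bundle, as recorded above in this subsection, and $T_{\rho_0}$ is the trivial line bundle. These are precisely the conditions ensuring that $T_{\mathrm{Sp}}$ is a flat family of 0-generated modules over $B=\End(T_{\mathrm{Sp}})$, so that the multigraded linear series $\mathcal{M}(T_{\mathrm{Sp}})$, its cornered spaces $\mathcal{M}(T_{[\rho]})$, and the cornering morphisms between them are all defined exactly as in Section~\ref{sec:prelim}. Applying Theorem~\ref{thm:closedimm} then yields that $\tilde\tau$ induces a closed immersion on the reduced scheme underlying $\mathcal{M}(T_{\mathrm{Sp}})$.

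The final step is to remove the qualifier \emph{reduced}: by \cite[Theorem~4.4]{craw2018multigraded}, the space $\mathcal{M}(T_{\mathrm{Sp}})$ is the minimal resolution of the quotient singularity $\mathbb{A}^2/\Gamma$, hence smooth and in particular reduced, so the reduced scheme underlying $\mathcal{M}(T_{\mathrm{Sp}})$ is $\mathcal{M}(T_{\mathrm{Sp}})$ itself. Consequently the product of cornering morphisms is a closed immersion on all of $\mathcal{M}(T_{\mathrm{Sp}})$. Since the whole argument is a translation of notation followed by a single application of the main theorem, there is no genuine obstacle; the only points requiring care are verifying that $\mathrm{Sp}$ contains the trivial representation $\rho_0$ so that the indexing $\rho\in\mathrm{Sp}\setminus\rho_0$ matches the range $1\le i\le n$ in $\tilde\tau$, and invoking the smoothness from Theorem~4.4, which is exactly what upgrades the conclusion from the reduced scheme to $\mathcal{M}(T_{\mathrm{Sp}})$.
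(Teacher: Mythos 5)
Your proposal is correct and follows exactly the route the paper takes: the corollary is a direct specialisation of Theorem~\ref{thm:closedimm} to $E=T_{\mathrm{Sp}}$, with the reducedness of $\mathcal{M}(T_{\mathrm{Sp}})$ (it is the minimal resolution of $\mathbb{A}^2/\Gamma$, hence smooth) used to drop the ``reduced scheme underlying'' qualifier. Your explicit verification of the hypotheses and of $\rho_0\in\mathrm{Sp}$ is slightly more careful than the paper's one-line justification, but the argument is the same.
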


\begin{remark}
As the singularity $\mathbb{A}^2/\Gamma$ is rational, 
\[ h^1(\mathcal{M}(T_{[\rho]}),\mathcal{O}_{\mathcal{M}(T_{[\rho]})})= h^1(\mathcal{M}(T_{\mathrm{Sp}}),\mathcal{O}_{\mathcal{M}(T_{\mathrm{Sp}})})=0.\] Therefore, by \cite[Exercise~III.12.6.]{hartshorne1977algebraic}
\[ \mathrm{Pic}\left(\prod_{\rho \in \mathrm{Sp} \setminus \rho_0} \mathcal{M}(T_{[\rho]})\right) = \bigoplus_{\rho \in \mathrm{Sp} \setminus \rho_0} \mathrm{Pic} (\mathcal{M}(T_{[\rho]}))\simeq \mathrm{Pic}(\mathcal{M}(T_{\mathrm{Sp}})).\]
\end{remark}

\subsection{Hilbert schemes of Kleinian orbifolds}
\label{sec:Hilb}


Let $\Gamma < \SL(2,\SC)$ be a finite subgroup. The McKay graph of $\Gamma$ is defined to be the graph with vertex set $\{0,1, \dots, r\}$ corresponding to {\balazs the irreducible representations $\rho_0=\rm{triv}, \rho_1, \ldots, \rho_r$} of $\Gamma$ such that, for each $0\leq i, j\leq r$, we have $\dim\Hom_{\Gamma}(\rho_j , \rho_i \otimes V )$ edges joining vertices $i$ and $j$. McKay~\cite{McKay80} observed that this graph is an extended Dynkin diagram of ADE type. The framed McKay graph of $\Gamma$ is obtained by introducing an additional vertex, denoted $\infty$, together with an edge joining the vertices $\infty$ and $0$.

Associate the doubled quiver to the framed McKay graph. For this, consider the set of pairs $Q_1$ comprising an edge in the framed McKay graph and an orientation of the edge. For each $a\in Q_1$, and we write $t(a)$, $h(a)$ for the vertices at the tail and head respectively of the oriented edge, and we write $a^{\ast}$ for the same edge with the opposite orientation. The \emph{framed McKay quiver of $\Gamma$}, denoted $Q$, is the quiver with vertex set {\balazs $Q_0= \{\infty,0,\dots ,r\}$} and arrow set $Q_1$. 

Let $\SC Q$ denote the path algebra of the framed McKay quiver $Q$. For $i \in Q_0$, let $e_i \in  \SC Q$ denote the idempotent corresponding to the trivial path at vertex $i$. Let $\epsilon\colon Q_1 \to \{\pm 1\}$ be any map such that $\epsilon(a) \neq \epsilon(a^{\ast})$ for all $a \in Q_1$. The \emph{preprojective algebra of $Q$}, denoted $\Pi$, is defined as the quotient of $\SC Q$ by the ideal
\begin{equation}
\label{eqn:preprojectiverelation}
\left\langle \sum_{\head(a)=i} \epsilon(a) aa^{\ast} \mid {i \in Q_0}
\right\rangle.
\end{equation}
 The preprojective algebra $\Pi$ does not depend on the choice of the map $\epsilon$. For each $i\in Q_0$, we simply write $e_i\in \Pi$ for the image of the corresponding vertex idempotent.

Let $A=\Pi/(b^{\ast})$ be the quotient by the two sided ideal $(b^{\ast})$, where $b^\ast$ is the unique arrow from $0$ to $\infty$. In this context, the vertex $\infty$, and not $0$, will play the distinguished role.
Define the idempotent \[\overline{e}_I \coloneqq e_{\infty} + \sum_{i \in I} e_i \in A.\] 
{\balazs For a non-empty subset $I \subseteq \{0,\dots,r\}$,} the subalgebra
\begin{equation}
\label{eqn:AI}
A_I \coloneqq \overline{e}_I A \overline{e}_I 
\end{equation}
of $A$ comprises linear combinations of 
classes of paths in $Q$ whose tails and heads lie in the set $\{\infty\} \cup I$. Let \[m_I=(m_{i_1},\dots,m_{i_{|I|}}) \in \mathbb{N}^{I}\] be a dimension vector of size $|I|$.
In \cite[Section~3.2]{craw2021quot}, a Quot scheme was constructed that parameterizes $m_I$-(multi)dimensional locally-free quotients of certain sheaves on the Kleinian orbifold $[\mathbb{C}^2 / \Gamma]$. Denote this scheme by \[\mathrm{Quot}_I^{m_I}([\SC^2/\Gamma]).\] As a special case, for $I = \{0,\dots,r\}$ and any $m \in \mathbb{N}^{r+1}$
\[\mathrm{Quot}_I^{m} ([\SC^2/\Gamma]) \cong \mathrm{Hilb}^{m}([\SC^2/\Gamma])\]
where
\begin{equation} 
\label{eq:orbihilb}
\mathrm{Hilb}^{m}([\SC^2/\Gamma]) = \Big\{J\in \mathrm{Hilb}\big(\SC^2\big)^{\Gamma}\mid H^0(\mathcal{O}_{\SC^2}/J) \cong \bigoplus_{i\in \{0, \dots, n\}} \rho_i^{\oplus m_i}\Big\},\
\end{equation}
a component of the $\Gamma$-equivariant Hilbert scheme of the plane; see \cite[Lemma~3.3]{craw2021quot}. Here we suppressed the index set $I$ from $m$ because it equals with the whole set $\{0,\dots,r\}$.

Extend the vector $m_I$ as \[(1,m_I) \in \mathbb{N} \oplus \mathbb{N}^{I}\]  
to get a dimension vector for $A_I$-modules, and consider the stability condition $\eta_{I}\colon \mathbb{Z} \oplus \mathbb{Z}^{I} \to \mathbb{Q}$
given by
\[
\eta_I (\rho_i) =\left\{
\begin{array}{c r}
-\sum_{j \in I} m_j & \textrm{for } i= \infty \\
1 & \textrm{if }i \in I
\end{array}\right. 
\]
The vector $(1,m_I)$ is indivisible and $\eta_I$ is a generic stability condition for $A_I$-modules of dimension vector $(1,m_I)$, so there is a fine moduli space $\mathcal{M}_{A_I}(1,m_I)$
of $\eta_I$-stable $A_I$-modules of dimension
vector $(1,m_I)$. Let \[T_I \coloneqq\bigoplus_{i \in \{\infty \}\cup I } T_i\] denote the tautological vector bundle on $\mathcal{M}_{A_I}(1,m_I)$, where $T_{\infty}$ is the trivial bundle and $T_i$ has rank $m_i$ for $i \in I$. This setting is exactly the situation of Section~\ref{sec:prelim}, so our Corollary~\ref{cor:corneredquiver} applies.

\begin{proposition}[{\cite[Proposition~4.2]{craw2021quot}}]
For any non-empty subset $I\subseteq\{0,\dots,r\}$ and $m_I \in \mathbb{N}^{I}$, there is an isomorphism of schemes 
\[ \mathrm{Quot}_I^{m_I}([\SC^2/\Gamma]) \longrightarrow \mathcal{M}_{A_I}(1,m_I).\]
\end{proposition}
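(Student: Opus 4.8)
The plan is to identify the functors represented by the two fine moduli spaces and invoke Yoneda: it suffices to produce a natural, base-change compatible bijection between flat families of the two kinds of object, which I will realize concretely through the universal families. The bridge between the two descriptions is the algebraic McKay correspondence for $\Gamma<\SL(2,\SC)$: the skew group algebra $\SC[x,y]\rtimes\Gamma$ is isomorphic to the preprojective algebra of the (unframed) McKay quiver, so that $\Gamma$-equivariant coherent sheaves on $\SC^2$ — equivalently, sheaves on the orbifold $[\SC^2/\Gamma]$ — correspond to representations of the doubled McKay quiver satisfying the preprojective relations, with the isotypic decomposition of a fibre becoming the dimension vector of the representation.

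First I would unwind the Quot datum on the quiver side. A point of $\mathrm{Quot}_I^{n_I}([\SC^2/\Gamma])$ is a surjection from the fixed sheaf attached to $I$ onto a locally free quotient of multi-rank $n$; the framing vertex $\infty$ together with the passage from $\Pi$ to $A=\Pi/(b^{\ast})$, which retains the arrow $\infty\to 0$ but kills its reverse, encodes exactly the one-sided datum of a quotient emanating from a distinguished one-dimensional space, rather than a map paired with a section. Passing from $A$ to the cornered algebra $A_I=\overline{e}_I A\overline{e}_I$ restricts attention to the vertices $\{\infty\}\cup I$, matching the restriction implicit in the Quot scheme indexed by $I$.

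Next I would construct the two morphisms directly from the universal families. The universal quotient on $\mathrm{Quot}_I^{n_I}([\SC^2/\Gamma])$ determines, through the McKay dictionary, a flat family of $\eta_I$-stable $A_I$-modules of dimension vector $(1,n_I)$, hence by the universal property of $\mathcal{M}_{A_I}(1,n_I)$ a morphism $\mathrm{Quot}_I^{n_I}([\SC^2/\Gamma])\to\mathcal{M}_{A_I}(1,n_I)$. In the other direction, the tautological bundle $T_I=\bigoplus_{i\in\{\infty\}\cup I}T_i$, interpreted through the same dictionary as a family of $\Gamma$-equivariant sheaves generated by the one-dimensional $T_\infty$, yields a family of locally free quotients and hence a morphism $\mathcal{M}_{A_I}(1,n_I)\to\mathrm{Quot}_I^{n_I}([\SC^2/\Gamma])$. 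The crux at this stage is to show that $\eta_I$-stability is equivalent to generation by the $\infty$-component: with the prescribed signs of $\eta_I$, an $A_I$-module of the given dimension vector is $\eta_I$-stable precisely when it is generated by its one-dimensional $\infty$-part, which is exactly surjectivity of the quotient, and moreover $\eta_I$-semistability coincides with $\eta_I$-stability so that no strictly semistable locus intervenes.

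The main obstacle I expect is this stability matching together with the bookkeeping of the cornering. The delicate point is that the preprojective relations at a vertex $i\in I$ involve arrows to neighbouring vertices possibly outside $I$, so I must verify that $A_I=\overline{e}_I A\overline{e}_I$ still records exactly the relations needed to reproduce the orbifold Quot data on $I$ and nothing more. Once the equivalence of $\eta_I$-stability with generation is established and the two morphisms above are checked to be mutually inverse on the level of families, Yoneda delivers the desired isomorphism of schemes.
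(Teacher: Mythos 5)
This proposition is not proved in the paper at all: it is imported verbatim as \cite[Proposition~4.2]{craw2021quot}, so there is no in-paper argument to compare yours against. Judged on its own terms, your outline follows the strategy one would expect (and that the cited reference essentially uses): Morita equivalence between $\Gamma$-equivariant sheaves on $\SC^2$ and modules over the preprojective algebra of the McKay quiver, framing at $\infty$ to encode the cyclic generator, cornering to pass from $\{0,\dots,r\}$ to $I$, and a functorial comparison of families. Your stability analysis is also correct: for dimension vector $(1,n_I)$ a submodule meeting the $\infty$-component nontrivially has $\eta_I\leq 0$, so $\eta_I$-stability is exactly generation by the $\infty$-part, and strictly semistable modules cannot occur.

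The gap is that the two steps you yourself flag as ``the main obstacle'' are precisely the mathematical content of the proposition, and you do not carry them out. First, you never pin down what $\mathrm{Quot}_I^{n_I}([\SC^2/\Gamma])$ parametrizes (``the fixed sheaf attached to $I$'' is left unspecified), so the claimed natural bijection of families cannot be checked; the functor being represented is part of the definition in \cite{craw2021quot} and the comparison hinges on it. Second, the issue that the preprojective relation at a vertex $i\in I$ involves arrows through neighbouring vertices outside $I$ is real: $A_I=\overline{e}_I A\overline{e}_I$ contains compositions of paths that leave and re-enter $\{\infty\}\cup I$, and one must show that an $A_I$-module generated at $\infty$ genuinely encodes a quotient sheaf on the orbifold restricted to the isotypic pieces in $I$ (this is where \cite{craw2021quot} does actual work, via the recollement/cornering formalism rather than a direct relation-by-relation check). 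As written, your text is a correct plan with the decisive verifications deferred, not a proof.
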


Therefore, for each nonempty $I\subset \{0,\dots,r\}$, there is a cornering morphism
\[ \mathrm{Hilb}^{m}([\SC^2/\Gamma]) \to \mathrm{Quot}_{I}^{m_{I}} ([\SC^2/\Gamma])\]
where $m_I=m|_{I}$, the restriction of $m$ to the indices contained in $I$. 
\begin{example}
For $I=\{0\}$ and $m_0 \in \mathbb{N}$,
\[\mathrm{Quot}_I^{m_0} ([\SC^2/\Gamma]) \cong \mathrm{Hilb}^{m_0}(\SC^2/\Gamma),\]
the Hilbert scheme of $m_0$ points on the quotient singularity $\SC^2/\Gamma$. Then for any $m \in \mathbb{N}^{n+1}$ the cornering morphism 
\[ \mathrm{Hilb}^{m}([\SC^2/\Gamma]) \to \mathrm{Hilb}^{m_0}(\SC^2/\Gamma) \]
can be identified with the map
\[ J \mapsto J^{\Gamma}\]
under the description \eqref{eq:orbihilb}.
This map is a resolution of singularities by \cite{craw2019punctual} if $m=m_0 \cdot \rho_{reg}$ with $\rho_{reg}$ the regular representation of $\Gamma$.

More generally, for a singleton $I=\{i\}$ with $i \in \{0,\dots,r\}$ the map 
\[ \mathrm{Hilb}^{m}([\SC^2/\Gamma]) \to \mathrm{Quot}_{i}^{m_{i}} ([\SC^2/\Gamma])\]
can be described as the projection
\[ J \to J^{\rho_i},\]
where $J^{\rho_i}$ is the $\rho_i$-isotypic part of $J$. 
When $\dim(\rho_i)=1$, there is yet another geometric description of the target as a Quot scheme of a certain sheaf on the quotient singularity $\SC^2/\Gamma$; for the details see \cite[Proposition~3.4]{craw2021quot}.
\end{example}

\begin{corollary}\label{cor:hilbcorner}
Let $I_1,\dots,I_l$ be a covering of the set $\{0,\dots,r\}$ and $m \in \mathbb{N}^{r+1}$. The  product of the cornering morphisms
induces a closed immersion
\[ \mathrm{Hilb}^{m}([\SC^2/\Gamma]) \to \prod_{i=1}^l\mathrm{Quot}_{I_i}^{m_{I_i}} ([\SC^2/\Gamma]) \]
\end{corollary}

\bibliographystyle{amsplain}
\bibliography{recoll}

\end{document}